\theoremstyle{plain}
\newtheorem{theorem}{Theorem}[section]
\newtheorem{lemma}[theorem]{Lemma}
\theoremstyle{definition}
\theoremstyle{remark}
\newtheorem{remark} [theorem]{Remark}
\newcommand{\DD}[2][]{\frac{\mathrm{d}^2{#1}}{\mathrm{d}{#2}^2}}
\newcommand{\norm}[2][]{\left\|#2\right\|_{#1}}
\newcommand{\R}{\mathbb{R}}
\newcommand{\e}{\mathrm{e}}
\newcommand{\dd}{\, \mathrm{d}}
\begin{document}
\title[Improved spectral inequalities on the semi-axis]{Improved sharp spectral inequalities for Schr{\"o}dinger operators on the semi-axis}

\author[L. Schimmer]{Lukas Schimmer}
\address{Lukas Schimmer, Institut Mittag--Leffler, The Royal Swedish Academy of Sciences, Djursholm 182 60, Sweden, Current address: Edinburgh, UK}
\email{lukas.schimmer@gmail.com}

\maketitle

\begin{abstract}
We prove a Lieb--Thirring inequality for Schr{\"o}dinger operators $-\DD{x}+V$ on the semi-axis with Robin boundary condition at the origin. The result improves on a bound obtained by P.~Exner, A.~Laptev and M.~Usman [Commun.~Math.~Phys. \textbf{362}(2), 531--541 (2014)] albeit under the additional assumption $V\in L^1(\R_+)$. The main difference in our proof is that we use the double commutation method in place of the single commutation method. We also establish an improved inequality in the case of a Dirichlet boundary condition.
\end{abstract}

\section{Introduction}

In their proof of stability of matter, Lieb and Thirring \cite{Lieb1975,Lieb1976} introduced the bound
\begin{align*}
\sum_{j\ge 1} \vert \lambda_j\vert^\gamma\le L_{\gamma,d}\int_{\R^d}V(x)_-^{\gamma+\frac d2}\dd x
\end{align*}
for the negative eigenvalues $\lambda_1\le\lambda_2\le\dots\le0$ of a Schr{\"o}dinger operator $-\Delta+V$ on $L^2(\R^d)$ with real-valued potential $V$ that decays sufficiently fast. Here and below $a_-=(\vert a\vert -a)/2$ denotes the negative part of a real variable $a\in\R$. The bound was proved for any $\gamma>\max(0,1-\frac d2)$ and was later extended to the endpoint cases $d=1,\gamma=\frac12$ and $d=3,\gamma=0$ in \cite{Weidl1996} and \cite{Cwikel1977,Lieb1976b,Rozenblum1976}, respectively. The sharp constants $L_{\gamma,d}$, which importantly do not depend on $V$, have been subject of intense investigation over the last 45 years \cite{Schimmer2022LT}. 

The case $d=1$ and $\gamma=\frac32$ has proved especially accessible to mathematical investigations due to its connection to trace formulae. The sharp constant $L_{\frac32,1}=\frac{3}{16}$ was established even before Lieb and Thirring's original papers by Gardner, Greene, Kruskal and Miura \cite{Gardner1961}. The authors  considered the Buslaev--Faddeev--Zaharov trace formula \cite{Buslaev1960, Zaharov1971}
\begin{align*}
\sum_{j\ge1} \vert \lambda_j\vert ^{\frac32}+\frac{3}{\pi}\int_{\R_+}k^2 \log \vert a(k)\vert \dd k=\frac{3}{16}\int_{\R} V(x)^2\dd x
\end{align*}
for the negative eigenvalues $\lambda_j$ of $-\DD{x}+V$ on $L^2(\R)$ and noted that the scattering coefficient satisfies $\vert a(k)\vert \ge 1$. This yields the sharp inequality
\begin{align}
\sum_{j\ge 1}\vert \lambda_j\vert ^\frac32
\le\frac{3}{16}\int_\R V(x)^2\dd x\,. 
\label{eq:LTR} 
\end{align}  
 An extension of \eqref{eq:LTR} to matrix-valued potentials by Laptev and Weidl \cite{Laptev2000} was crucial in establishing the sharp Lieb--Thirring constants $L_{\gamma,d}$ for $\gamma\ge3/2$ in all dimensions $d\ge1$. Note that  the trace formula also yields a bound on the integral involving the scattering coefficient, which has proved very useful in the investigation of the absolute continuity of the spectrum of the Schr{\"o}dinger operator \cite{Deift1999}.

In this short note, we consider the Schr{\"o}dinger operator
\begin{align*}
H=-\DD{x}+V(x)
\end{align*}
on $L^2(\R_+)$ with real-valued potential and Robin boundary condition
\begin{align*}
\varphi'(0)-\sigma_0\varphi(0)=0
\end{align*}
where $\sigma_0\in\R$. If the potential $V$ is sufficiently smooth and decays sufficiently fast, the negative spectrum of $H$ consists of discrete eigenvalues $\lambda_1\le\lambda_2\le\dots\le0$ with corresponding eigenfunctions $\varphi_j$. While trace formulae have also been established in this setting \cite{Demirel2011}, there is no known analogue of $\vert a(k)\vert \ge 1$. Thus Lieb--Thirring inequalities have to be proved by different means and could in turn be used to shed more light on the scattering coefficient.  
Our main result is the following Lieb--Thirring type bound.
\begin{theorem}\label{th:main}
Let $V\in L^1(\R_+)\cap L^2(\R_+)$.  The negative eigenvalues $\lambda_j$ of $-\DD{x}+V$ with Robin boundary condition $\varphi'(0)-\sigma_0\varphi(0)=0$ satisfy
\begin{align*}
\sum_{j\ge1} \vert \lambda_j\vert ^\frac32+\frac14\sum_{j\ge1}(\sigma_{j}^3-\sigma_{j-1}^3)\le \frac{3}{16}\int_0^\infty V(x)^2\dd x +\frac34\sum_{j\ge1}\vert \lambda_j\vert (\sigma_{j}-\sigma_{j-1})
\end{align*}
where
\begin{align*}
\sigma_j=\sigma_{j-1}+\frac{\vert \varphi_j(0)\vert ^2}{\norm{\varphi_j}^2}\,,\quad j=1,2,\dots
\end{align*}
and $\varphi_j$ denotes the eigenfunction to $\lambda_j$.
\end{theorem}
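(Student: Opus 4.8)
The plan is to remove the negative eigenvalues one at a time, from the bottom up, by iterating the double commutation (Darboux-type) transformation, tracking the exact change of $\int_0^\infty V^2$ at each step. Write $\lambda_1=-\kappa_1^2$ with $\kappa_1>0$, let $\varphi_1>0$ be the nodeless ground state, and introduce the seed function $g(x)=\int_x^\infty\varphi_1(t)^2\dd t$. The one-step claim is that $\tilde V=V-2(\log g)''$ defines a Schr\"odinger operator $\tilde H$ on $L^2(\R_+)$, again with $\tilde V\in L^2$, whose negative spectrum is exactly $\{\lambda_2,\lambda_3,\dots\}$ and which carries the Robin condition with the shifted parameter $\sigma_1=\sigma_0+|\varphi_1(0)|^2/\norm{\varphi_1}^2$. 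The algebraic engine is the Riccati substitution: with $w=\varphi_1'/\varphi_1$ one has $V=w'+w^2-\kappa_1^2$, and setting $R=g'/g$, $v=w-R$, a direct computation gives $\tilde V=v'+v^2-\kappa_1^2$, so that $v$ is the logarithmic derivative of the transformed solution at energy $-\kappa_1^2$. The boundary data are $w(0)=\sigma_0$ and, since $g'(0)=-\varphi_1(0)^2$ and $g(0)=\norm{\varphi_1}^2$, $R(0)=-(\sigma_1-\sigma_0)$, whence $v(0)=\sigma_1$; at infinity $w\to-\kappa_1$ and $R\to-2\kappa_1$, so $v\to+\kappa_1$.

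The trace identity then follows from a total-derivative manipulation rather than any hard estimate. Expanding $V^2=(w')^2+(w^2-\kappa_1^2)^2+\bigl(\tfrac23 w^3-2\kappa_1^2 w\bigr)'$ and the analogous expression for $\tilde V^2$ in $v$, and using $R'=w^2-v^2$ together with $V-\tilde V=2R'$, the two bulk integrands combine into $R'(V+\tilde V)=\tfrac12(V^2-\tilde V^2)$. Absorbing this into the left-hand side leaves only boundary terms:
\begin{align*}
\int_0^\infty(V^2-\tilde V^2)\dd x=\Bigl[\tfrac43(w^3-v^3)-4\kappa_1^2(w-v)\Bigr]_0^\infty.
\end{align*}
Inserting the boundary values gives exactly
\begin{align*}
\int_0^\infty V^2\dd x-\int_0^\infty\tilde V^2\dd x=\tfrac{16}{3}\kappa_1^3+4\kappa_1^2(\sigma_0-\sigma_1)-\tfrac43(\sigma_0^3-\sigma_1^3),
\end{align*}
the $\tfrac{16}{3}\kappa_1^3$ arising entirely from the endpoint at infinity, where $v$ and $w$ take the opposite signs $\pm\kappa_1$ and reproduce the full whole-line soliton contribution, and the $\sigma$-terms from the endpoint at the origin.

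With the one-step identity in hand the proof concludes by iteration: apply the transformation to $\tilde H$ to peel off $\lambda_2$, updating $\sigma_1\mapsto\sigma_2=\sigma_1+|\varphi_2(0)|^2/\norm{\varphi_2}^2$, and so on. Summing the identities telescopes the potential terms, and discarding the nonnegative leftover $\int_0^\infty V_\infty^2\ge0$ from the final operator, which has no negative spectrum, yields
\begin{align*}
\tfrac{16}{3}\sum_{j\ge1}\kappa_j^3\le\int_0^\infty V^2\dd x-4\sum_{j\ge1}\kappa_j^2(\sigma_{j-1}-\sigma_j)+\tfrac43\sum_{j\ge1}(\sigma_{j-1}^3-\sigma_j^3),
\end{align*}
which after multiplication by $\tfrac3{16}$ and the substitutions $\kappa_j^3=|\lambda_j|^{3/2}$, $\kappa_j^2=|\lambda_j|$ is precisely the asserted bound.

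I expect the main obstacle to lie in the rigorous justification of the double commutation step rather than in the trace identity. Three points need care: that the transformation preserves membership in $L^2(\R_+)$ and maps the Robin domain with parameter $\sigma_0$ to the Robin domain with parameter $\sigma_1$ (the self-adjointness bookkeeping at the origin); that it is isospectral on the negative spectrum apart from deleting $\lambda_1$, so that the surviving eigenfunctions and the defining ratios $|\varphi_j(0)|^2/\norm{\varphi_j}^2$ transform consistently with the stated recursion for $\sigma_j$; and that the iteration can be carried to its limit when there are infinitely many eigenvalues, together with the reduction of general $V\in L^2$, $V\le0$, to smooth rapidly decaying potentials. I would handle the first two via the standard commutation machinery adapted to the half-line, and the last by a density argument using continuity of the eigenvalues and boundary data.
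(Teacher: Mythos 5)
Your proposal is correct and follows essentially the same route as the paper: a double commutation step (your $g$, $w$, $R$, $v$ are, up to an additive constant inside the logarithm, exactly the paper's $\Phi_1$, $F$, $G$, $\widetilde{F}$), the same Riccati boundary values $\sigma_0,\sigma_1$ at the origin and $\mp\kappa_1$ at infinity, the same one-step identity for $\int_0^\infty (V^2-\tilde V^2)\dd x$, telescoping over the eigenvalues, and a closing density argument. The only differences are cosmetic --- you organize the computation via $(V-\tilde V)(V+\tilde V)=2R'(V+\tilde V)$ and $R'=w^2-v^2$ where the paper expands $\int_0^\infty G'(G'-V)\dd x$ using the two Riccati equations --- and the three points you flag as needing rigorous care (spectral exactness of the commutation, preservation of the ratios $|\varphi_j(0)|^2/\norm{\varphi_j}^2$ under the transformation, and the approximation step) are precisely those the paper settles by invoking the Gesztesy--Teschl results.
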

\begin{remark}
From the proof it is clear that the bound also holds if each of the three sums only extends to $j\le N$ for some cutoff $N\ge1$ (with additional terms replaced by 0 if there are fewer than $N$ negative eigenvalues).
All four quantities in the inequality above are then non-negative and non-decreasing in $N$. Thus the two sides of the inequality are also well-defined in the case of infinitely many negative eigenvalues, though the theorem does not make any assertion about the finiteness of the two series involving $\sigma_j$. However, the difference of the fourth and second term is always bounded from above. Some explicit upper bounds that could be useful in applications will be discussed in Section \ref{sec:comp}. 
Note that finiteness of the discrete spectrum holds for example if $\int_{0}^\infty(1+x)\vert V(x)\vert \dd x<\infty$ and in particular if $V\in\mathcal{C}_0^\infty([0,\infty))$.
\end{remark}

In the special case of a Dirichlet boundary condition, we obtain the following.
\begin{theorem}\label{th:Dir}
Let $V\in L^1(\R_+)\cap L^2(\R_+)$.  The negative eigenvalues $\lambda_j$ of $-\DD{x}+V$ with Dirichlet boundary condition $\varphi(0)=0$ satisfy
\begin{align*}
\sum_{j\ge1}\vert \lambda_j\vert ^\frac32\le \frac{3}{16}\int_0^\infty V(x)^2\dd x -\frac34\sum_{j\ge1}\frac{\vert \varphi_j'(0)\vert ^2}{\norm{\varphi_j}^2}
\end{align*}
where $\varphi_j$ denotes the eigenfunction to $\lambda_j$.
\end{theorem}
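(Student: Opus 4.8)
The plan is to obtain Theorem \ref{th:Dir} from Theorem \ref{th:main} by letting the Robin parameter tend to $+\infty$, since the condition $\varphi'(0)-\sigma_0\varphi(0)=0$ formally becomes the Dirichlet condition $\varphi(0)=0$ as $\sigma_0\to+\infty$. I would take the limit from the repulsive side $\sigma_0>0$, so that no spurious boundary bound state is created: for $V=0$ the operator with $\sigma_0>0$ has empty negative spectrum, whereas for $\sigma_0<0$ it carries a boundary state at $-\sigma_0^2$ that escapes to $-\infty$. It is convenient to argue first for $V$ smooth and compactly supported, so that $H$ has only finitely many negative eigenvalues and all sums are finite; the general case $V\in L^2(\R_+)$ then follows by approximation, controlling the tails by the very inequality being proved.

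First I would record the continuity of the spectral data in $\sigma_0$. Writing the Robin quadratic form as $\int_0^\infty|\varphi'|^2+\sigma_0|\varphi(0)|^2+\int_0^\infty V|\varphi|^2$ on the fixed form domain $H^1(\R_+)$, the min--max principle shows that each negative eigenvalue $\lambda_j(\sigma_0)$ is monotone increasing in $\sigma_0$ and converges to the corresponding Dirichlet eigenvalue $\lambda_j^{\mathrm{D}}$; the (simple) eigenfunctions $\varphi_j$ converge in $H^1(\R_+)$, and hence, via the eigenvalue equation $\varphi_j''=(V-\lambda_j)\varphi_j$, in $C^1$ near the origin, to the Dirichlet eigenfunctions $\psi_j$. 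In particular $\norm{\varphi_j}\to\norm{\psi_j}$, while $\varphi_j(0)\to\psi_j(0)=0$ and $\varphi_j'(0)\to\psi_j'(0)$; these ratios are phase-independent, so the convergence up to a phase in $\varphi_j$ causes no difficulty.

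The key algebraic observation is that the boundary condition gives the \emph{exact} identity
\begin{align*}
\sigma_0^2(\sigma_j-\sigma_{j-1})=\sigma_0^2\,\frac{|\varphi_j(0)|^2}{\norm{\varphi_j}^2}=\frac{|\varphi_j'(0)|^2}{\norm{\varphi_j}^2},
\end{align*}
so that $\sigma_j-\sigma_{j-1}=O(\sigma_0^{-2})$ and, with $\sigma_\infty:=\lim_j\sigma_j=\sigma_0+\sum_k(\sigma_k-\sigma_{k-1})$, one has $\sigma_\infty/\sigma_0\to1$. Using that the cubic sum telescopes, $\sum_{j\ge1}(\sigma_{j-1}^3-\sigma_j^3)=\sigma_0^3-\sigma_\infty^3$, I would rewrite the boundary correction of Theorem \ref{th:main} exactly as
\begin{align*}
&-\tfrac34\sum_{j\ge1}|\lambda_j|(\sigma_{j-1}-\sigma_j)+\tfrac14\sum_{j\ge1}(\sigma_{j-1}^3-\sigma_j^3)\\
&\qquad=\frac{3}{4\sigma_0^2}\sum_{j\ge1}|\lambda_j|\,\frac{|\varphi_j'(0)|^2}{\norm{\varphi_j}^2}-\frac{\sigma_0^2+\sigma_0\sigma_\infty+\sigma_\infty^2}{4\sigma_0^2}\sum_{j\ge1}\frac{|\varphi_j'(0)|^2}{\norm{\varphi_j}^2}.
\end{align*}
Letting $\sigma_0\to+\infty$, the first term vanishes through its factor $\sigma_0^{-2}$, while $(\sigma_0^2+\sigma_0\sigma_\infty+\sigma_\infty^2)/\sigma_0^2\to3$ and each summand tends to $|\psi_j'(0)|^2/\norm{\psi_j}^2$; the whole expression therefore converges to $-\tfrac34\sum_j|\psi_j'(0)|^2/\norm{\psi_j}^2$. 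Since the left-hand side $\sum_j|\lambda_j|^{3/2}$ converges to $\sum_j|\lambda_j^{\mathrm{D}}|^{3/2}$ and $\int_0^\infty V^2$ is unaffected, the Robin inequality passes in the limit precisely to the claimed Dirichlet inequality.

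The main obstacle is the interchange of the limit $\sigma_0\to+\infty$ with the summation over $j$. For smooth compactly supported $V$ this is vacuous, since there are finitely many negative eigenvalues and every limit above is termwise; the genuine work is the approximation step needed to cover all $V\in L^2(\R_+)$, where one must dominate the tails of $\sum_j|\varphi_j'(0)|^2/\norm{\varphi_j}^2$ and $\sum_j|\lambda_j|\,|\varphi_j'(0)|^2/\norm{\varphi_j}^2$ uniformly in $\sigma_0$. I expect these uniform bounds to follow from the a priori Robin estimate together with the monotonicity $\lambda_j(\sigma_0)\uparrow\lambda_j^{\mathrm{D}}$, after which dominated convergence for series closes the argument.
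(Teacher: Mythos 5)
Your argument is correct in substance, but it takes a genuinely different route from the paper. The paper does not obtain Theorem \ref{th:Dir} as a limit of Theorem \ref{th:main}; it runs the double commutation method directly on the Dirichlet operator. The only obstacle there is that the Dirichlet ground state vanishes at the origin, and Remark \ref{rem:zero} resolves it: unlike the single commutation method, the double commutation method needs no zero-free eigenfunction, because $G'(G'-V)$ is an exact derivative of a quantity defined without dividing by $\varphi_1$. The Wronskian boundary condition of the commuted operator, $\psi(0)\varphi_1'(0)-\psi'(0)\varphi_1(0)=0$, reduces again to $\psi(0)=0$, integration gives
\begin{align*}
\int_0^\infty V_1(x)^2\dd x=\int_0^\infty V(x)^2\dd x-\frac{16}{3}|\lambda_1|^{\frac32}-4\frac{|\varphi_1'(0)|^2}{\norm{\varphi_1}^2}\,,
\end{align*}
and $\psi_1'(0)=\varphi_2'(0)$ from \eqref{eq:newef} allows the step to be iterated; positivity of $\int V_N^2\dd x$ then finishes the proof. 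Your route instead exploits the exact relation $|\varphi_j(0)|^2=\sigma_0^{-2}|\varphi_j'(0)|^2$, the telescoping of the cubic sum, and the limit $(\sigma_0^2+\sigma_0\sigma_\infty+\sigma_\infty^2)/\sigma_0^2\to3$; the algebra is right, and it gives a nice structural explanation of why the Dirichlet correction carries the coefficient $\tfrac34$ and the derivative $\varphi_j'(0)$ in place of $\varphi_j(0)$. What the paper's proof buys is self-containedness: no spectral convergence theory whatsoever is needed. What yours buys is the insight that Theorem \ref{th:Dir} is precisely the $\sigma_0\to+\infty$ degeneration of Theorem \ref{th:main}.

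Two of your steps need more than an appeal to min--max. First, min--max yields monotonicity of $\lambda_j(\sigma_0)$ and the bound $\lambda_j(\sigma_0)\le\lambda_j^{\mathrm{D}}$, but the actual convergence $\lambda_j(\sigma_0)\to\lambda_j^{\mathrm{D}}$, together with the $H^1$-convergence of eigenfunctions (hence convergence of $\varphi_j'(0)$ and $\norm{\varphi_j}$), requires a real argument: the monotone-convergence theorem for quadratic forms (Kato/Simon) giving strong resolvent convergence of the Robin operators to the Dirichlet operator, plus Robin--Dirichlet interlacing to exclude spectral pollution, or alternatively a direct ODE/Weyl-function argument. Second, the eigenvalue count can drop in the limit: a Robin eigenvalue may tend to $0$ without a corresponding negative Dirichlet eigenvalue. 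Such indices disappear from the left-hand side since $|\lambda_j|^{3/2}\to0$, and on the right-hand side they can be discarded because in your rewriting they enter the dominant sum with a favourable (negative) sign; one only needs the elementary bound that $|\varphi_j'(0)|^2/\norm{\varphi_j}^2$ stays bounded so that their contribution to the $O(\sigma_0^{-2})$ term vanishes. Neither issue is fatal, but both belong in a complete write-up.
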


\begin{remark}

From the proof it is again clear that the bound also holds if each of the two sums only extends to $j\le N$ for some cutoff $N\ge1$. Both sums are non-negative and non-decreasing in $N$. Letting $N\to\infty $ we can conclude that under the assumptions of the theorem the two series are both finite, even in the case of infinitely many eigenvalues. 
\end{remark}
 
Note that the inequality of Theorem \ref{th:Dir} without the negative last term can be obtained from the whole line result \eqref{eq:LTR}.  The inequality of Theorem \ref{th:main} should be compared to the following result by Exner, Laptev and Usman \cite{Exner2014} which was established in the same setting but without the assumption $V\in L^1(\R_+)$.

\begin{theorem}[{\cite[Theorem 1.1]{Exner2014}}]\label{th:LTELU}
Let $V\in L^2(\R_+), V\le0$. The negative eigenvalues $\lambda_j$ of $-\DD{x}+V$ with Robin boundary condition $\varphi'(0)-\sigma_0\varphi(0)=0$ satisfy
\begin{align*}
\frac12\vert \lambda_1\vert ^\frac32+\sum_{j\ge 2}\vert \lambda_j\vert ^\frac32
\le\frac{3}{16}\int_0^\infty V(x)^2\dd x-\frac34\vert \lambda_1\vert \sigma_0+\frac14\sigma_0^3\,.
\end{align*} 
\end{theorem}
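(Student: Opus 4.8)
The plan is to establish Theorem \ref{th:LTELU} by the \emph{single} commutation (Darboux) method: peel off only the ground state, observe that this converts the Robin condition into a Dirichlet one, and then invoke the sharp whole-line inequality \eqref{eq:LTR} for the remainder of the spectrum. Let $\varphi_1>0$ be the (nodeless) ground state, $-\varphi_1''+V\varphi_1=\lambda_1\varphi_1$ with $\lambda_1=-\kappa_1^2$, and set $w=\varphi_1'/\varphi_1$. The Robin condition gives $w(0)=\sigma_0$, while $\varphi_1(x)\sim\e^{-\kappa_1 x}$ forces $w(x)\to-\kappa_1$ as $x\to\infty$. I introduce the first-order operator $A=\frac{\mathrm d}{\mathrm d x}-w$, so that $A^\ast A=H-\lambda_1$ and $AA^\ast=\tilde H-\lambda_1$, where $\tilde H=-\DD{x}+\tilde V$ with $\tilde V=V-2(\ln\varphi_1)''=V-2w'$. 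This should be contrasted with the double commutation used for Theorems \ref{th:main} and \ref{th:Dir}, and it is precisely the passage to a Dirichlet problem that will produce the suboptimal coefficient $\tfrac12$.

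First I would record the spectral effect of $A$. Since $A\varphi_1=0$, the operator $\tilde H$ on $L^2(\R_+)$ has negative eigenvalues exactly $\lambda_2\le\lambda_3\le\cdots$, with eigenfunctions $A\varphi_j$. Crucially, for $j\ge2$ one has $(A\varphi_j)(0)=\varphi_j'(0)-\sigma_0\varphi_j(0)=0$ by the Robin condition, so $\tilde H$ carries a \emph{Dirichlet} boundary condition at the origin. The Dirichlet half-line bound obtained from \eqref{eq:LTR} by odd reflection — that is, Theorem \ref{th:Dir} with its negative last term discarded — then yields $\sum_{j\ge2}|\lambda_j|^{3/2}\le\frac{3}{16}\int_0^\infty\tilde V^2\dd x$.

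It remains to compare $\int_0^\infty\tilde V^2$ with $\int_0^\infty V^2$. Here I would use the Riccati identity $V=w'+w^2+\lambda_1$, which gives $\tilde V-V=-2w'$ and $\tilde V+V=2w^2+2\lambda_1$, hence $\tilde V^2-V^2=-4w'(w^2+\lambda_1)$ — a total derivative. Integrating, $\int_0^\infty(\tilde V^2-V^2)\dd x=-\big[\tfrac43 w^3+4\lambda_1 w\big]_0^\infty$, and inserting $w(0)=\sigma_0$, $w(\infty)=-\kappa_1$ together with $\lambda_1=-\kappa_1^2$ produces $\int_0^\infty\tilde V^2\dd x=\int_0^\infty V^2\dd x-\tfrac83\kappa_1^3-4\kappa_1^2\sigma_0+\tfrac43\sigma_0^3$. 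Multiplying by $\tfrac{3}{16}$, combining with the Dirichlet bound, and recalling $\kappa_1^3=|\lambda_1|^{3/2}$, $\kappa_1^2=|\lambda_1|$, the terms rearrange precisely into the asserted inequality, the coefficient $\tfrac12$ of $|\lambda_1|^{3/2}$ emerging from $\tfrac{3}{16}\cdot\tfrac83=\tfrac12$.

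The main obstacle is analytic rather than algebraic: justifying the commutation rigorously. I would need that $\varphi_1$ is nodeless so that $w$ and $\tilde V$ are regular on $[0,\infty)$, that $\tilde V$ decays fast enough to lie in $L^2(\R_+)$ (note $\tilde V=-V+2(w^2-\kappa_1^2)$, with the second term decaying exponentially), and that $\tilde H$ is self-adjoint with the stated Dirichlet condition and discrete negative spectrum exactly $\{\lambda_j\}_{j\ge2}$, no spurious eigenvalue being created at $\lambda_1$. One also has to check that the whole-line/Dirichlet inequality applies to $\tilde V$, which need not satisfy $\tilde V\le0$; since \eqref{eq:LTR} holds whenever $V_-\in L^2$, this causes no difficulty. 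Finally, for general $V\in L^2(\R_+)$ that is not smooth I would first prove the identity for a dense class of smooth, rapidly decaying potentials and then pass to the limit, using continuity of both sides under the relevant convergence of $V$ and of the low-lying eigendata; throughout, the boundary contributions at $x=0$, which carry the entire $\sigma_0$-dependence, must be tracked with care.
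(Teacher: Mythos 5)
Your proposal is correct in substance, but it takes a genuinely different route from the one this paper uses for Theorem \ref{th:LTELU}. What you have reconstructed is essentially the original Exner--Laptev--Usman argument: a single commutation with $A=\frac{\mathrm{d}}{\mathrm{d}x}-\varphi_1'/\varphi_1$ removes $\lambda_1$, converts the Robin condition into a Dirichlet one (since $(A\varphi_j)(0)=\varphi_j'(0)-\sigma_0\varphi_j(0)=0$), and the Riccati identity $\tilde V^2-V^2=-4w'(w^2+\lambda_1)$ reduces the comparison of $\int_0^\infty \tilde V^2\dd x$ with $\int_0^\infty V^2\dd x$ to boundary terms; I checked your bookkeeping of these terms and of the constants, and it is accurate, with $\frac{3}{16}\cdot\frac83=\frac12$ producing the coefficient of $|\lambda_1|^{3/2}$ exactly as you say. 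The paper, by contrast, never proves Theorem \ref{th:LTELU} from scratch: in Section \ref{sec:comp} it derives it as a corollary of the stronger Theorem \ref{th:main} (itself proved by \emph{double} commutation), by bounding $|\lambda_j|\le|\lambda_1|$, telescoping $\sum_j(\sigma_j-\sigma_{j-1})=\sigma_N-\sigma_0$, using the cubic identity $\frac12 t^3-\frac34 t^2s+\frac14 s^3=\frac14(t-s)^2(2t+s)$ with $t=|\lambda_1|^{1/2}$, $s=\sigma_N$, and verifying $2|\lambda_1|^{1/2}+\sigma_N\ge0$ (via the min--max bound $|\lambda_1|\ge\sigma_0^2$ when $\sigma_0\le0$). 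Your route is self-contained given \eqref{eq:LTR} and the commutation facts; the paper's route is shorter at this point and, more importantly, quantifies exactly what is lost in weakening Theorem \ref{th:main} to Theorem \ref{th:LTELU} --- namely the terms $\frac34\sum_j(|\lambda_1|-|\lambda_j|)(\sigma_j-\sigma_{j-1})$ and $\frac14(|\lambda_1|^{1/2}-\sigma_N)^2(2|\lambda_1|^{1/2}+\sigma_N)$ --- which is the whole point of the comparison section.

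One caveat on a detail you state in passing: justifying the Dirichlet half-line bound by ``odd reflection'' does not by itself give the constant $\frac{3}{16}$. Extending $\tilde V$ evenly and reflecting eigenfunctions oddly only shows that the Dirichlet eigenvalues embed into the whole-line spectrum of the evenly extended potential, and \eqref{eq:LTR} then yields the bound $\frac{3}{16}\int_\R (\tilde V_e)_-^2\dd x=\frac38\int_0^\infty \tilde V_-^2\dd x$, a factor $2$ too large. The standard repair is either to extend $\tilde V$ by zero to the negative half-line and use form monotonicity ($H_0^1(\R_+)\subset H^1(\R)$ together with the min--max principle), which gives $\frac{3}{16}\int_0^\infty \tilde V_-^2\dd x$ directly, or, if one insists on the even extension, to use the parity decomposition of the whole-line spectrum together with the Neumann--Dirichlet comparison. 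This is a routine fix and does not affect the rest of your argument.
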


Theorem \ref{th:LTELU} shows that compared to the whole line case \eqref{eq:LTR}, the boundary condition at zero leads to a change in the term corresponding to $\lambda_1$ in the Lieb--Thirring bound. Our result in Theorem \ref{th:main} aims to further elaborate on the influence of the boundary condition. In Section \ref{sec:comp} we will show that the additional terms in Theorem \ref{th:main} strengthen the inequality. In particular, Theorem \ref{th:LTELU} can be obtained from our result. While the inequality in Theorem \ref{th:main} may be difficult to use in applications due to the necessary knowledge of $\sigma_j$ (and thus of $\vert \varphi_j(0)\vert /\norm{\varphi_j}$) for $j\ge1$, we will show in Section \ref{sec:comp} how in some cases the bound can be weakened to a form that does not require this information. Some of these results cannot be obtained directly from Theorem \ref{th:LTELU}. Before we prove the main result, it is worth pointing out the differences in our proof method compared to the existing literature. 

For $d=1$ the so-called commutation method has proved valuable in establishing sharp Lieb--Thirring inequalities. This method goes back to the idea of inserting eigenvalues into the spectrum of differential operators and was first discussed by Jacobi \cite{Jacobi1837}, Darboux \cite{Darboux1882} and Crum \cite{Crum1955}. A rigorous characterisation can be found in \cite{Deift1978, Gesztesy1993, Gesztesy1996}. For the purpose of proving Lieb--Thirring inequalities, the method is reversed and eigenvalues are successively removed from the spectrum, starting with the lowest, $\lambda_1$. To this end one constructs a first-order differential operator $D$ that factorises the original Schr{\"o}dinger operator as $-\DD{x}+V=DD^*+\lambda_1$. Commuting $D$ and $D^*$ leads to a new operator $-\DD{x}+V_1=D^*D+\lambda_1$, which has the same spectrum as the original operator with the exception of the eigenvalue $\lambda_1$.  In order to obtain a spectral inequality, it is necessary to establish a connection between integrals of powers of the potentials $V$ and $V_1$ (such as  $\int V^2\dd x$), and the eigenvalue $\lambda_1$. Assuming that there are only finitely many negative eigenvalues $\lambda_1,\dots,\lambda_N$, repetition of this process removes all of these eigenvalues from the spectrum and one eventually obtains an identity that links $\lambda_1,\dots,\lambda_N$ to integrals of $V$ and some potential $V_N$ that corresponds to a Schr{\"o}dinger operator without negative eigenvalues. If this last term has a definite sign, an inequality can be obtained. 

In the case of a Schr{\"o}dinger operator on the real line, the commutation method was first used by Schmincke \cite{Schmincke1978} to prove the lower bound 
\begin{align}
\sum_{j\ge1}\vert \lambda_j\vert ^\frac12\ge-\frac14\int_{\R} V(x)\dd x\,.
\label{eq:SchminckeR}
\end{align}
Subsequently, it has been applied to provide a new, direct proof of \eqref{eq:LTR} in the case of matrix-valued potentials  \cite{Benguria2000} (as first established by Laptev and Weidl \cite{Laptev2000}) and to prove similar inequalities for fourth-order differential operators \cite{Hoppe2006} and Jacobi operators \cite{Schimmer2015}. In a slight variation, this proof method has also been used to establish Theorem \ref{th:LTELU}. Here, after removing the first eigenvalue, one obtains a Schr{\"o}dinger operator with Dirichlet boundary condition at zero. The Lieb--Thirring inequality is then proved by  continuing the problem to the whole line and applying \eqref{eq:LTR}. Our Theorem \ref{th:Dir} shows that such an approach cannot yield a sharp inequality if the potential supports more than one eigenvalue (under the additional condition $V\in L^1(\R_+)$). Recently the same variation of the commutation method has been applied to fourth-order operators on the semi-axis \cite{Zia2019}. 

In all of theses results, the applied method is more precisely known as the single commutation method. In comparison, the so-called double commutation method  \cite{Gesztesy1993,Gesztesy1996} involves an additional step where after commuting $D,D^*$  the resulting operator is again factorised using a new first-order operator $D_\gamma$ such that $-\DD{x}+V_1=D^*D+\lambda_1=D_\gamma^* D_\gamma+\lambda_1$. Applying a second commutation, one obtains yet another Schr{\"o}dinger operator $-\DD{x}+V_{\gamma,1}=D_\gamma D_\gamma^*+\lambda_1$ that has the same spectrum as the original operator with the exception of the eigenvalue $\lambda_1$. This method has several advantages compared to the single commutation method. For example, it allows to remove eigenvalues in arbitrary order, as it does not require the corresponding eigenfunction to have no zeros. In our case, its main advantage is that after the first step, we do not obtain a Schr{\"o}dinger operator with Dirichlet boundary condition, but rather one with a new Robin boundary condition. This leads to the additional terms in Theorem \ref{th:main} compared to Theorem \ref{th:LTELU}. To the best of our knowledge, the double commutation method has not been used previously in the context of Lieb--Thirring inequalities. In \cite{Boumenir2009} the closely related Gelfand--Levitan method \cite{Levitan1987} was applied in the same setting as in this note to obtain the lower bound
\begin{align*}
\sum_{j\ge1}\vert \lambda_j\vert ^\frac12\ge
-\frac14\int_{\R} V(x)\dd x-\frac{1}{4}\sigma_0+\frac14\sum_{j\ge1}\frac{\vert \varphi_j(0)\vert ^2}{\norm{\varphi_j}^2}
\end{align*}
for the operator $-\DD{x}+V$ on $L^2(\R_+)$ with Robin boundary condition. This result shows that the boundary condition at the origin influences Schmincke's inequality \eqref{eq:SchminckeR} in a similar way as it influences the Lieb--Thirring inequality \eqref{eq:LTR} in Theorem \ref{th:main}.

In Section \ref{sec:double} we will introduce the double commutation method in more detail and subsequently we will use it in Section \ref{sec:proof} to prove Theorem \ref{th:main} and Theorem \ref{th:Dir}.

\section{The double commutation method}\label{sec:double}
For brevity we restrict ourselves to the case at hand, i.e.~a Schr{\"o}dinger operator $H=-\DD{x}+V$ on $L^2(\R_+)$ with Robin boundary condition $\varphi'(0)-\sigma\varphi(0)=0$. For comparison we first state the single commutation method, details of which can be found in \cite{Deift1978}.
\begin{theorem}
Let $\varphi$ be an eigenfunction of $H=-\DD{x}+V$ to the lowest eigenvalue $\lambda$. Then the operator $H_\lambda=-\DD{x}+V_\lambda$ with potential
\begin{align*}
V_\lambda(x)=V(x)-2\DD{x}\log\varphi(x)
\end{align*}
and with Dirichlet boundary condition
\begin{align*}
\varphi(0)=0
\end{align*}
has spectrum $\sigma(H_\lambda)=\sigma(H)\setminus\{\lambda\}$.
\end{theorem}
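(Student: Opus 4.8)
The plan is to realise this statement as a Darboux (supersymmetric) factorisation of $H-\lambda$, exploiting that for a first-order operator $A$ the two products $A^*A$ and $AA^*$ have identical spectra away from zero. Since $\lambda$ is the lowest eigenvalue, Sturm--Liouville theory guarantees that its eigenfunction $\varphi$ is nodeless on $(0,\infty)$, so I may take $\varphi>0$ and introduce the smooth logarithmic derivative $w=(\log\varphi)'=\varphi'/\varphi$. First I would consider the first-order operator $A=\frac{\mathrm{d}}{\mathrm{d}x}-w$ and its formal adjoint $A^*=-\frac{\mathrm{d}}{\mathrm{d}x}-w$. Writing the eigenvalue equation $-\varphi''+V\varphi=\lambda\varphi$ as the Riccati identity $V-\lambda=w^2+w'$, a direct computation yields the factorisations
\begin{align*}
A^*A=-\DD{x}+(w^2+w')=H-\lambda,\qquad AA^*=-\DD{x}+(w^2-w')=H_\lambda-\lambda,
\end{align*}
where in the second identity $w^2-w'=(V-\lambda)-2(\log\varphi)''$ reproduces exactly the stated potential $V_\lambda=V-2(\log\varphi)''$.

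Next I would check that $A$ intertwines the boundary conditions correctly. Because $\varphi$ obeys the Robin condition, $w(0)=\varphi'(0)/\varphi(0)=\sigma$, and hence $(Af)(0)=f'(0)-\sigma f(0)$ for every $f$. Thus $A$ carries functions satisfying the Robin condition $f'(0)-\sigma f(0)=0$ into functions satisfying the Dirichlet condition at the origin, while $A^*$ maps in the opposite direction. This is what makes the factorisations hold at the level of the self-adjoint realisations with their respective domains, and it gives the intertwining relation $A(H-\lambda)=AA^*A=(H_\lambda-\lambda)A$ on $\dom(H)$.

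From the intertwining relation the spectral claim follows. For $\mu\neq0$, if $(H-\lambda)f=\mu f$ then $(H_\lambda-\lambda)(Af)=\mu\,(Af)$, and $A$ is injective on this eigenspace because $Af=0$ would give $\mu f=A^*Af=0$; the symmetric statement for $A^*$ furnishes the inverse, so $A$ sets up a bijection between the $\mu$-eigenspaces of $H-\lambda$ and $H_\lambda-\lambda$. Consequently $\sigma(H_\lambda)\setminus\{\lambda\}=\sigma(H)\setminus\{\lambda\}$, the essential spectrum being preserved since $V_\lambda-V=-2(\log\varphi)''$ decays at infinity. It remains to confirm that $\lambda$ itself is expelled. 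On one hand $A\varphi=0$, so $\lambda\in\sigma(H)$. On the other hand, for any $\psi\in\dom(H_\lambda)$ the Dirichlet condition $\psi(0)=0$ kills the boundary term in the integration by parts and gives $\langle\psi,(H_\lambda-\lambda)\psi\rangle=\norm{A^*\psi}^2$; a putative null eigenvector would therefore satisfy $A^*\psi=0$, i.e. $\psi\propto1/\varphi$, which is not square-integrable because $\varphi$ decays exponentially. Hence $\lambda\notin\sigma(H_\lambda)$ and $\sigma(H_\lambda)=\sigma(H)\setminus\{\lambda\}$ as claimed. The step I expect to be most delicate is the rigorous handling of domains and boundary terms: one must justify that $A,A^*$ genuinely interchange the Robin and Dirichlet self-adjoint realisations (so that the operator identities above are not merely formal), and verify the vanishing of all boundary contributions at the origin and at infinity that underlie both the intertwining and the final energy identity.
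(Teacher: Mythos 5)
Your proposal is correct and is essentially the same argument the paper relies on: the paper states this theorem without proof (referring to Deift's commutation paper), and its accompanying remark records precisely your factorisation, namely $H=DD^*+\lambda$ and $H_\lambda=D^*D+\lambda$ with $D=\frac{\mathrm{d}}{\mathrm{d}x}+\varphi'/\varphi$, your operator $A$ being just $-D^*$ in that notation. The points you flag as delicate (that $A$, $A^*$ genuinely exchange the Robin and Dirichlet realisations, and that all boundary terms vanish) are exactly the technical content supplied by the cited reference; your intertwining argument, the boundary-condition bookkeeping $(Af)(0)=f'(0)-\sigma f(0)$, and the exclusion of $\lambda$ via the non-square-integrability of $1/\varphi$ constitute the standard proof.
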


\begin{remark}
As discussed in the introduction, the result is the consequence of the factorisation $H=DD^*+\lambda$ and  $H_\lambda=D^*D+\lambda$, where more precisely $D=\frac{\mathrm{d}}{\mathrm{d}x}+\frac{\varphi'}{\varphi}$. 
\end{remark}
 
The spectral characterisation of the double commutation method was first achieved in \cite{Gesztesy1993} for Schr{\"o}dinger operators on $L^2(\R)$ as well as on $L^2(\R_+)$ with Dirichlet boundary condition at the origin. The results were extended to Sturm--Liouville operators on arbitrary intervals with Robin boundary conditions in \cite{Gesztesy1996}, from where we take the following result \cite[Theorem 3.2]{Gesztesy1996} (see also \cite[Remark 3.3 (i)]{Gesztesy1996}). 
\begin{theorem}\label{th:double}
Let $\varphi$ be an eigenfunction of $H=-\DD{x}+V$ with eigenvalue $\lambda$ and let  
$\gamma=-1/\norm{\varphi}^2$. Then the operator $H_\lambda=-\DD{x}+V_\lambda$ with potential
\begin{align*}
V_\lambda(x)=V(x)-2\DD{x}\log\!\left(1+\gamma\int_0^x\vert \varphi(t)\vert ^2\dd t\right)
\end{align*}
and with Robin boundary condition
\begin{align*}
\psi'(0)-\sigma_\lambda\psi(0)=0\,,\qquad \sigma_\lambda=\sigma+\frac{\vert \varphi(0)\vert ^2}{\norm{\varphi}^2}
\end{align*}
has point spectrum $\sigma_p(H_\lambda)=\sigma_p(H)\setminus\{\lambda\}$. Furthermore, $\psi$ is an eigenfunction of $H$ with eigenvalue $\eta\neq\lambda$ if and only if
\begin{align*}
\psi_\lambda(x)=\psi(x)-\gamma\widetilde{\varphi}(x)\int_0^x\psi(t)\overline{\varphi(t)}\dd t
\end{align*}
is an eigenfunction of $H_\lambda$ with eigenvalue $\eta\neq\lambda$ where the function $\widetilde{\varphi}$ is defined as
\begin{align*}
\widetilde{\varphi}(x)=\frac{\varphi(x)}{1+\gamma\int_0^x\vert \varphi(t)\vert ^2\dd t}\,.
\end{align*} 

\end{theorem}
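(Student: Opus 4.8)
The plan is to reduce Theorem~\ref{th:double} to a direct verification at the level of the underlying second-order equation, together with an endpoint analysis. Write $\tau=-\DD{x}+V$ for the differential expression and set $c(x)=1+\gamma\int_0^x|\varphi(t)|^2\dd t$ and $\widetilde\varphi=\varphi/c$, so that $V_\lambda=V-2(\log c)''$ and $c'=\gamma|\varphi|^2$. Because $\gamma=-1/\norm{\varphi}^2$, one has $c(x)>0$ for every finite $x$ (as $\int_0^x|\varphi|^2<\norm{\varphi}^2$) while $c(x)\to 0$ as $x\to\infty$; the positivity guarantees that $V_\lambda$ and $\widetilde\varphi$ are well defined on $[0,\infty)$, and the vanishing at infinity is precisely what removes $\lambda$ at the end. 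Conceptually the transformation is a pair of single commutations---delete $\lambda$ via $\tau-\lambda=DD^*$, then re-factorise and commute back with the free parameter $\gamma$---but the shortest rigorous route is to check the intertwining formula by hand.

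The core step is the intertwining identity: if $\psi$ solves $\tau\psi=\eta\psi$ then $\psi_\lambda=\psi-\gamma\widetilde\varphi\int_0^x\psi\overline{\varphi}\dd t$ solves $\tau_\lambda\psi_\lambda=\eta\psi_\lambda$ with $\tau_\lambda=-\DD{x}+V_\lambda$. I would prove this by differentiating $\psi_\lambda$ twice, repeatedly inserting $c'=\gamma|\varphi|^2$ and the two equations $\tau\psi=\eta\psi$, $\tau\varphi=\lambda\varphi$: the $(\eta-\lambda)$-factor produced by $\tau$ acting on the integral term is absorbed by a Wronskian identity for $W(\varphi,\psi)$, while the $(\log c)''$ term in $V_\lambda$ is generated exactly by the derivatives landing on $\widetilde\varphi=\varphi/c$. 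This is mechanical but bookkeeping-heavy. Evaluating the boundary data is then immediate: at $x=0$ the integral vanishes and $c(0)=1$, so $\psi_\lambda(0)=\psi(0)$ and $\psi_\lambda'(0)=\psi'(0)-\gamma|\varphi(0)|^2\psi(0)$; substituting $\psi'(0)=\sigma\psi(0)$ and $\gamma=-1/\norm{\varphi}^2$ yields $\psi_\lambda'(0)=\sigma_\lambda\psi_\lambda(0)$ with $\sigma_\lambda=\sigma+|\varphi(0)|^2/\norm{\varphi}^2$, exactly the claimed Robin condition.

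I would then upgrade these pointwise identities to the spectral statement. Applying the intertwining formula to $\psi=\varphi$ and $\eta=\lambda$ collapses, using $\int_0^x|\varphi|^2=(c-1)/\gamma$, to $\psi_\lambda=\widetilde\varphi$; hence $\widetilde\varphi$ is the candidate eigenfunction of $H_\lambda$ at $\lambda$. A one-line computation with $c'=\gamma|\varphi|^2$ gives $\norm{\widetilde\varphi}^2=\int_0^\infty|\varphi|^2/c^2=-\gamma^{-1}\bigl(c(\infty)^{-1}-1\bigr)$, which diverges precisely because $c(\infty)=0$; thus $\lambda\notin\sigma_p(H_\lambda)$. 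For every $\eta\neq\lambda$ I would check that $\psi\in L^2(\R_+)$ forces $\psi_\lambda\in L^2(\R_+)$ and that the correspondence is a bijection, its inverse being a double commutation of the same form built from $\widetilde\varphi$. Together these give $\sigma_p(H_\lambda)=\sigma_p(H)\setminus\{\lambda\}$ and the stated eigenfunction map.

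The main obstacle is the functional-analytic bookkeeping at infinity rather than the algebra: one must control the asymptotics of $V_\lambda$ near $x=\infty$, where $c\to0$ produces a singular-looking tail, verify that $H_\lambda$ is self-adjoint on the correct domain carrying the Robin condition, and confirm that the map creates or destroys no point spectrum beyond $\lambda$. The limit-point/limit-circle analysis at both endpoints and the matching of operator domains is the delicate part; it is carried out in the Sturm--Liouville generality of \cite{Gesztesy1996}, on which I would rely for the cleanest treatment.
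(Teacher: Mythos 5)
Your proposal is correct in outline, but note that the paper itself contains no proof of Theorem \ref{th:double}: the statement is imported wholesale from \cite[Theorem 2.3]{Gesztesy1996} (together with \cite[Remark 3.3 (i)]{Gesztesy1996}), and the paper's only original content here is Remark \ref{rem:double}, observing that the Wronskian boundary condition $\psi(0)\varphi'(0)-\psi'(0)\varphi(0)=0$ of \cite{Gesztesy1996} reduces to the stated Robin condition. Your sketch therefore does strictly more than the paper: the intertwining algebra, the boundary computation at $x=0$ (which reproduces exactly the content of Remark \ref{rem:double}, since $\psi_\lambda(0)=\psi(0)$ and $\psi_\lambda'(0)=\psi'(0)-\gamma|\varphi(0)|^2\psi(0)$), the identity $\psi_\lambda=\widetilde{\varphi}$ when $\psi=\varphi$, and the divergence $\norm{\widetilde{\varphi}}=\infty$ excluding $\lambda$ from the point spectrum are all correct; your description of the inverse map is also right (a double commutation built from $\widetilde{\varphi}$ with constant $-\gamma$ recovers $V$ exactly). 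The one place where your sketch hides the essential mechanism is the claim that $\psi\in L^2(\R_+)$ ``forces'' $\psi_\lambda\in L^2(\R_+)$: since $\widetilde{\varphi}=\varphi/c$ grows exponentially (your own computation shows $\widetilde{\varphi}\notin L^2$), the term $\gamma\widetilde{\varphi}\int_0^x\psi\overline{\varphi}\dd t$ is \emph{not} square-integrable for a generic $L^2$ function $\psi$; it is so for an eigenfunction with $\eta\neq\lambda$ only because of the orthogonality $\int_0^\infty\psi\overline{\varphi}\dd t=0$, which lets one rewrite $\int_0^x\psi\overline{\varphi}\dd t=-\int_x^\infty\psi\overline{\varphi}\dd t$ and thereby offset the growth of $\widetilde{\varphi}$. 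This orthogonality argument is precisely what underlies \cite[Lemma 2.1]{Gesztesy1996}, which the paper later invokes to get $\norm{\psi_1}=\norm{\varphi_2}$. Since you explicitly defer the functional-analytic bookkeeping to \cite{Gesztesy1996} anyway---exactly as the paper does---this is an omission of emphasis rather than a fatal gap, but it is the step a referee would ask you to make explicit.
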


\begin{remark}\label{rem:double}
In the notation of \cite{Gesztesy1996}, the boundary condition of $H_\lambda$ is given by the vanishing Wronskian $\psi(0)\varphi'(0)-\psi'(0)\varphi(0)=0$, which can easily be reduced to the one given above. 
As mentioned in the introduction, the double commutation method relies on a second factorisation $D^*D+\lambda=D_\gamma^*D_\gamma+\lambda$, where more precisely $D_\gamma=\frac{\mathrm{d}}{\mathrm{d}x}+\frac{\widetilde{\varphi}'}{\widetilde{\varphi}}$. 
\end{remark}

\section{The proofs of Theorem \ref{th:main} and Theorem \ref{th:Dir}}\label{sec:proof}
In many cases, proofs of Lieb--Thirring inequalities initially restrict to compactly supported potential $V$ and then use an approximation argument to extend the result to more general $V\in L^{\gamma+d/2}(\R^d)$. Since the bound in Theorem \ref{th:main} contains the terms $\sigma_j$, in our case such an approximation argument would necessarily have to establish the continuous dependence of the eigenfunctions on the potential in terms of the norm on $L^2(\R_+)$. To avoid this argument altogether, our proof will not restrict to compactly supported potentials. Establishing the required asymptotic behaviour of eigenfunctions is then more technical and relies on the additional assumption $V\in L^1(\R_+)$. This assumption is also necessary in the proof of the corresponding trace formula \cite{Demirel2011}. We do not know whether Theorem \ref{th:main} holds true without it. 

\subsection{The proof of Theorem \ref{th:main}} 
Let $\varphi_1$ now be the eigenfunction for the eigenvalue $\lambda_1$ and let $\gamma_1=-1/\norm{\varphi_1}^2$. As a ground state, $\varphi_1$ does not vanish anywhere (see e.g.~\cite{Exner2014} for a proof in this setting). It can thus be chosen to be strictly positive. Note that the behaviour of $\varphi_1$ at the origin is characterised by the boundary condition
\begin{align}
\varphi_1'(0)-\sigma_0\varphi_1(0)=0\,.
\label{eq:phi0}
\end{align}
For large $x$ the asymptotic behaviour 
\begin{align}
\lim_{x\to\infty}\varphi_1(x)\e^{\sqrt{\vert \lambda_1\vert }x}=C_1\,,\quad
\lim_{x\to\infty}\varphi_1'(x)\e^{\sqrt{\vert \lambda_1\vert }x}=-C_1\sqrt{\vert \lambda_1\vert }
\label{eq:phix0}
\end{align}
holds with some $C_1>0$. This is a consequence of the additional assumption $V\in L^1(\R_+)$ (see e.g.~\cite[Lemma 1]{Boumenir2009} which uses \cite[Theorem 8, Section 22]{Naimark1968}).

By Theorem \ref{th:double} the operator $H_1=-\DD{x}+V_1$ with potential
\begin{align*}
V_1(x)=V(x)-2\DD{x}\log\!\left(1+\gamma_1\int_0^x\vert \varphi_1(t)\vert ^2\dd t\right)
\end{align*}
and Robin boundary condition
\begin{align*}
\varphi'(0)-\sigma_1\varphi(0)=0\,,\qquad \sigma_1=\sigma_0+\frac{\vert \varphi_1(0)\vert ^2}{\norm{\varphi_1}^2}
\end{align*}
has only the negative eigenvalues $\lambda_2\le\lambda_3\le\dots\le0$. The potential can be written as $V_1=V-2G'$ with
\begin{align*}
G(x)=\frac{\gamma_1\varphi_1(x)^2}{1+\gamma_1\int_0^x\vert \varphi_1(t)\vert ^2\dd t}
\end{align*}
which can be further decomposed into $G=F-\widetilde{F}$ with
\begin{align*}
F(x)=\frac{\varphi_1'(x)}{\varphi_1(x)}\,,\qquad \widetilde{F}(x)=\frac{\widetilde{\varphi}_1'(x)}{\widetilde{\varphi}_1(x)}
\end{align*}
and 
\begin{align*}
\widetilde{\varphi}_1(x)=\frac{\varphi_1(x)}{1+\gamma_1\int_0^x\vert \varphi_1(t)\vert ^2\dd t}\,.
\end{align*}

\begin{lemma}\label{lem:F}
The functions $F$ and $\widetilde{F}$ solve the first-order differential equations 
\begin{align*}
F^2+F'=V-\lambda_1\,,\qquad \widetilde{F}^2-\widetilde{F}'+2 F'=V-\lambda_1
\end{align*}
with boundary conditions
\begin{align*}
F(0)&=\sigma_0\,,&\widetilde{F}(0)&=\sigma_1\,,\\
\lim_{x\to\infty} F(x)&=-\sqrt{\vert \lambda_1\vert }\,,&\lim_{x\to\infty}\widetilde{F}(x)&=\sqrt{\vert \lambda_1\vert }\,.
\end{align*}
\end{lemma}

\begin{proof}
The differential equation for $F$ can be found in several applications of the single commutation method. It is an immediate consequence of the eigenequation for $\varphi_1$
\begin{align*}
F(x)^2+F'(x)=\frac{\varphi_1'(x)^2+\varphi_1''(x)-\varphi_1'(x)^2}{\varphi_1(x)^2}=V(x)-\lambda_1\,.
\end{align*}
The boundary conditions follow from \eqref{eq:phi0} and \eqref{eq:phix0}. 
For $\widetilde{F}$ we compute that
\begin{align*}
\widetilde{F}(x)^2-\widetilde{F}'(x)=F(x)^2-F'(x)+G'(x)-2F(x)G(x)+G(x)^2
\end{align*}
and the differential equation can be proved by verifying that $G'(x)-2F(x)G(x)+G(x)^2=0$. 
The boundary condition at the origin is a consequence of \eqref{eq:phi0} while for $x\to\infty$ we use \eqref{eq:phix0} and l'Hospital's rule to compute
\begin{align*}
\lim_{x\to\infty}\widetilde{F}(x)&=\lim_{x\to\infty}\left(\frac{\varphi_1'(x)}{\varphi_1(x)}-\frac{\gamma_1\vert \varphi_1(x)\vert ^2}{1+\gamma_1\int_0^x\vert \varphi_1(t)\vert ^2\dd t}\right)\\
&
=-\sqrt{\vert \lambda_1\vert }-\lim_{x\to\infty}\frac{2\varphi_1(x)\varphi_1'(x)}{\varphi_1(x)^2}
=\sqrt{\vert \lambda_1\vert }\,.
\qedhere
\end{align*}
\end{proof}

We first note that
\begin{align*}
\int_0^\infty V_1(x)^2\dd x=\int_0^\infty V(x)^2\dd x+4 \int_0^\infty G'(x)\big(G'(x)-V(x)\big)\dd x\,.
\end{align*}
The last term on the right-hand side can be computed explicitly by using Lemma \ref{lem:F} 
\begin{align*}
&\int_0^\infty G'(x)\big(G'(x)-V(x)\big)\dd x\\
&=\int_0^\infty F'(x)\big(F'(x)-V(x)\big)\dd x
-\int_0^\infty \widetilde{F}'(x)\big(2F'(x)-\widetilde{F}'(x)-V(x)\big)\dd x\\
&=-\int_0^\infty F'(x)\big(\lambda_1+F(x)^2\big)\dd x
+\int_0^\infty \widetilde{F}'(x)\big(\lambda_1+\widetilde{F}(x)^2\big)\dd x\\
&=\Big[\vert \lambda_1\vert  F(x)-\frac13F(x)^3-\vert \lambda_1\vert  \widetilde{F}(x)+\frac13 \widetilde{F}(x)^3\Big]_{x=0}^{x=\infty}\\
&=-\frac{4}{3}\vert \lambda_1\vert ^\frac32+\vert \lambda_1\vert (\sigma_1-\sigma_0)-\frac13(\sigma_1^3-\sigma_0^3)\,.
\end{align*}
Thus we arrive at
\begin{align*}
\int_0^\infty V_1(x)^2\dd x
=-\frac{16}{3}\vert \lambda_1\vert ^\frac32+4\vert \lambda_1\vert (\sigma_1-\sigma_0)-\frac43(\sigma_1^3-\sigma_0^3)+\int_0^\infty V(x)^2\dd x\,.
\end{align*}

We aim to repeat the process and thus check whether $V_1$ satisfies the assumptions of Theorem \ref{th:main}. The identity above shows that $V_1\in L^2(\R_+)$. In  \cite[Lemma 2]{Boumenir2009} it is stated that $V_1\in L^1(\R_+)$, arguing that  $\vert G'\vert \in L^1(\R_+)$ since $G'(x)\ge 0$ for sufficiently large $x$. The latter is claimed to be a consequence of the asymptotics of $\varphi_1$. Unfortunately we could not fill in all of the details of the argument. In particular we could not rule out that $G'$ oscillates as $x\to\infty$. We instead present an argument that avoids investigating the integrability of $V_1$ altogether. In the computations above, the property $V\in L^1(\R_+)$ was only used to prove the asymptotic behaviour of the ground state $\varphi_1$ of $H$. More generally, the condition $V\in L^1(\R_+)$ guarantees that the eigenfunctions $\varphi_j$ of $H$ satisfy
\begin{align*}
\lim_{x\to\infty}\varphi_j(x)\e^{\sqrt{\vert \lambda_j\vert }x}=C_j\,,\quad
\lim_{x\to\infty}\varphi_j'(x)\e^{\sqrt{\vert \lambda_j\vert }x}=-C_j\sqrt{\vert \lambda_j\vert }
\end{align*} 
with $C_j\neq0$. These results already imply similar asymptotics for the eigenfunctions $\psi_j$ of $H_1$ without the need to establish $V_1\in L^1(\R_+)$. To this end we note that by Theorem \ref{th:double}
\begin{align}
\psi_j(x)
=\varphi_{j+1}(x)+\gamma_1\widetilde{\varphi}_1(x)\int_x^\infty\varphi_{j+1}(t)\overline{\varphi_1(t)}\dd t\,.
\label{eq:newef}
\end{align}
Using l'Hospital's rule it is straightforward to compute the three limits 
\begin{align*}
\lim_{x\to\infty}\widetilde{\varphi}_1(x)\e^{-\sqrt{\vert \lambda_1\vert }x}
&=-\frac{2}{C_1\gamma_1}\sqrt{\vert \lambda_1\vert }\,,\\
\lim_{x\to\infty}\widetilde{\varphi}_1'(x)\e^{-\sqrt{\vert \lambda_1\vert }x}
&=-\frac{2}{C_1\gamma_1}\vert \lambda_1\vert\,, \\
\lim_{x\to\infty}\int_x^\infty\varphi_{j+1}(t)\overline{\varphi_1(t)}\dd t
\,\e^{\sqrt{\vert \lambda_1\vert x}}\e^{\sqrt{\vert \lambda_{j+1}\vert x}}
&=\frac{C_1C_{j+1}}{\sqrt{\vert \lambda_1\vert }+\sqrt{\vert \lambda_{j+1}\vert }}\,.
\end{align*}
From \eqref{eq:newef} we then obtain the desired asymptotics
\begin{align*}
\lim_{x\to\infty}\psi_j(x)\e^{\sqrt{\vert \lambda_j\vert }x}=D_j\,,\quad
\lim_{x\to\infty}\psi_j'(x)\e^{\sqrt{\vert \lambda_j\vert }x}=-D_j\sqrt{\vert \lambda_j\vert }
\end{align*}
with $D_j=C_{j+1}(\sqrt{|\lambda_{j+1}|}-\sqrt{|\lambda_{1}|})/(\sqrt{|\lambda_{j+1}|}+\sqrt{|\lambda_{1}|})\neq0$. 

We can thus repeat the process for $H_1$ and remove $\lambda_2$ from its spectrum. While the eigenfunctions of $H_1$ are different to those of $H$, the relevant quantities in the definition of $\sigma_2$ importantly do not differ. More precisely, \eqref{eq:newef} allows us to conclude that $\psi_1(0)=\varphi_2(0)$ and furthermore that $\norm{\psi_1}^2=\norm{\varphi_2}^2$, as shown in \cite[Lemma 2.1]{Gesztesy1996}. Thus $\sigma_2$ can be written as $\sigma_2=\sigma_1+\vert {\psi}_1(0)\vert ^2/\norm{{\psi}_1}^2=\sigma_1+\vert {\varphi}_2(0)\vert ^2/\norm{{\varphi}_2}^2$. 

We can continue in this manner, noting that in each application of the double commutation method, the desired eigenfunction asymptotics inductively hold true. This yields the identity
\begin{align*}
&\int_0^\infty V_N(x)^2\dd x\\
&=-\frac{16}{3}\sum_{j=1}^N\vert \lambda_j\vert ^\frac32+4\sum_{j=1}^N\vert \lambda_j\vert (\sigma_j-\sigma_{j-1})-\frac43(\sigma_N^3-\sigma_0^3)+\int_0^\infty V(x)^2\dd x
\end{align*}
after $N$ steps. Since the left-hand side is non-negative we obtain the inequality
\begin{align*}
\sum_{j=1}^N\vert \lambda_j\vert ^\frac32+\frac14\sum_{j=1}^N(\sigma_{j}^3-\sigma_{j-1}^3)\le \frac{3}{16}\int_0^\infty V(x)^2\dd x +\frac34\sum_{j=1}^N\vert \lambda_j\vert (\sigma_{j}-\sigma_{j-1})\,.
\end{align*}
If the number of negative eigenvalues is finite, this is  already the desired bound. In the case of infinitely many eigenvalues, we can let $N\to\infty$ as all four terms are positive and non-decreasing in $N$.

\subsection{The proof of Theorem \ref{th:Dir}}
We start with the following observation. 
\begin{remark}\label{rem:zero}
We recall that $F$ and $\widetilde{F}$ in Lemma \ref{lem:F} were well-defined, since under the assumptions of Theorem \ref{th:main} the ground state $\varphi_1$ does not have any zeros. This fact was subsequently also used in the proof of the lemma. Note, however, that the decomposition $G=F-\widetilde{F}$ was only necessary in order to evoke similarities to the single commutation method and to simplify the computations. It can also be checked directly that the identity
\begin{align*}
&G'(x)\big(G'(x)-V(x)\big)=\\
&\phantom{=}\frac{\mathrm{d}}{\mathrm{d}x}\left(\vert \lambda_1\vert G(x)
-\frac{\gamma_1\varphi_1'(x)^2\Phi_1(x)^2-\gamma_1^2\varphi_1'(x)\varphi_1(x)^3\Phi_1(x)+\frac13\gamma_1^3\varphi_1(x)^6}{\Phi_1(x)^3}\right)
\end{align*}
holds, where $\Phi_1(x)=1+\gamma_1\int_0^x\vert \varphi_1(t)\vert ^2\dd t$. Here, all involved quantities are well-defined even if $\varphi_1$ has zeros. 
This shows that the double commutation method does not require us to remove the eigenvalues in increasing order. Furthermore, in a more general setting, the double commutation method could be used to remove eigenvalues in gaps of the essential spectrum other than the lowest one. 
\end{remark}
The above remark shows that we can apply the double commutation method to the Schr{\"o}dinger operator $-\DD{x}+V$ on $L^2(\R_+)$ with Dirichlet boundary condition at the origin. After the initial step, the operator $H_1=-\DD{x}+V_1$ is characterised (see Remark \ref{rem:double}) by the vanishing Wronskian $\psi(0)\varphi_1'(0)-\psi'(0)\varphi_1(0)=0$ which reduces to $\psi(0)=0$. Following the procedure above, we obtain the identity
\begin{align*}
\int_0^\infty V_1(x)^2\dd x=-\frac{16}{3}\vert \lambda_1\vert ^\frac32-4\frac{\vert \varphi_1'(0)\vert ^2}{\norm{\varphi_1}^2}+\int_0^\infty V(x)^2\dd x\,.
\end{align*}
From \eqref{eq:newef} we see that $\psi_1'(0)=\varphi_2'(0)$. We can then continue removing eigenvalues from the spectrum.  Repeating the process for altogether $N$ steps and using again that $\int_0^\infty V_N(x)^2\dd x\ge0$  we obtain
\begin{align*}
\sum_{j=1}^N\vert \lambda_1\vert ^\frac32+\frac{3}{4}\sum_{j=1}^N\frac{\vert \varphi_j'(0)\vert ^2}{\norm{\varphi_j}^2}\le\frac{3}{16}\int_0^\infty V(x)^2\dd x\,.
\end{align*}
This finishes the proof if the operator has only finitely many eigenvalues. The general case follows from taking $N\to\infty$ and noting that all three terms are non-negative and non-decreasing in $N$.

\section{Comparison and simplifications}\label{sec:comp}

\subsection{Comparison to Theorem \ref{th:LTELU}}
Under the assumptions of Theorem \ref{th:main} and if $V\le 0$,  the presented inequality is stronger than the result of Theorem \ref{th:LTELU}. To this end we note that by definition $\sigma_{j}-\sigma_{j-1}\ge0$ as well as $\vert\lambda_j\vert\le\vert\lambda_1\vert$ and thus for any $N\ge 1$
\begin{align}
\frac34\sum_{j=1}^N\vert \lambda_j\vert (\sigma_{j}-\sigma_{j-1})+\frac14(\sigma_0^3-\sigma_N^3)
\le\frac34\vert\lambda_1\vert(\sigma_N-\sigma_0)+\frac14(\sigma_0^3-\sigma_N^3)\,.
\label{eq:N1}
\end{align}
If $\sigma_0\ge0$ then also $\sigma_N\ge0$ and by Young's inequality
\begin{align}
\frac34\vert\lambda_1\vert\sigma_N\le\frac12\vert\lambda_1\vert^{\frac32}+\frac14\sigma_N^3\,.
\label{eq:Young}
\end{align}
If $\sigma_0<0$ then the inequality still holds true. To this end we note that by the min--max principle $\vert \lambda_1\vert \ge\sigma_0^2$ since $V\le0$ and since the operator without potential has a single negative eigenvalue $-\sigma_0^2$. Thus $(2\vert \lambda_1\vert ^{\frac12}+\sigma_N)\ge 0$ and from the identity
\begin{align}
\frac34\vert \lambda_1\vert \sigma_N
=\frac{1}{2}\vert \lambda_1\vert ^{\frac32}+\frac14\sigma_N^3-\frac{1}{4}(\vert \lambda_1\vert ^\frac12-\sigma_N)^2(2\vert \lambda_1\vert ^{\frac12}+\sigma_N)
\label{eq:Youngid} 
\end{align}
we again obtain \eqref{eq:Young}. Inserting \eqref{eq:Young} into \eqref{eq:N1} establishes that the inequality in Theorem \ref{th:main} implies the inequality in Theorem \ref{th:LTELU} if $V\in L^1(\R_+)\cap L^2(\R_+), V\le0$. The assumptions in the latter can then be relaxed to $V\in L^2(\R_+), V\le 0$ by the standard approximation arguments. 

We will provide an explicit example where the former inequality becomes an equality, while the latter remains a strict inequality.  To this end, we apply the double commutation method to insert a single eigenvalue into the spectrum of the free Schr{\"o}dinger operator $-\DD{x}$ with Neumann boundary condition $\varphi'(0)=0$. For fixed $\omega\in\R$ we consider $\varphi(x)=\cosh(\omega x)$, which satisfies $-\varphi''=-\omega^2\varphi$ as well as $\varphi'(0)=0$. Note that in contrast to the assumptions in Theorem \ref{th:double}, the function $\varphi$ is not an element of $L^2(\R_+)$. Furthermore we choose $\gamma>0$. From \cite[Theorem 3.2]{Gesztesy1996} we can conclude that the operator $-\DD{x}+V$ with potential
\begin{align*}
V(x)=-2\frac{\mathrm{d}}{\mathrm{d}x}\left(\frac{\gamma\cosh^2(\omega x)}{1+\gamma\int_0^x\cosh^2(\omega t)\dd t}\right)
\end{align*}
and Robin boundary condition $\varphi'(0)+\gamma\varphi(0)=0$ has a single negative eigenvalue $-\omega^2$. By construction (or by direct computation) the inequality of Theorem \ref{th:main} is found to be an equality in this case. In particular
\begin{align*}
\frac{3}{16}\int_0^\infty V(x)^2\dd x=\frac{1}{4}\gamma^3-\frac34\gamma\omega^2+\omega^3\,.
\end{align*}
The inequality of Theorem \ref{th:LTELU} on the other hand reduces to $\frac{\omega^3}{2}\le\omega^3$, which shows that for this example, the factor of $\frac12$ in front of the lowest eigenvalue is not necessary. 

Both inequalities are sharp for the free operator $-\DD{x}$ with boundary condition $\varphi'(0)-\sigma_0\varphi(0)=0$, which for $\sigma_0<0$ has a single negative eigenvalue $-\sigma_0^2$ with normalised eigenfunction $\varphi_1(x)=\sqrt{-2\sigma_0}\e^{\sigma_0 x}$. Under the assumptions of Theorem \ref{th:main}, the inequality of Theorem \ref{th:LTELU} cannot be an identity for potentials $V\in L^1(\R_+)\cap L^2(\R_+)$ with more than one eigenvalue, since the bound was proved by applying \eqref{eq:LTR} to the Dirichlet problem obtained after the initial step of the single commutation method. By Theorem \ref{th:Dir} this yields a strict inequality.  

\subsection{Some simplifications in special cases}
In some cases the bound in Theorem \ref{th:main} can be simplified such that it does not depend on the (often unknown) quantities $\sigma_j$ for $j\ge1$.

If $\sigma_0\ge0$ then Young's inequality allows us to conclude that
\begin{align*}
\frac34\vert\lambda_1\vert(\sigma_N-\sigma_0)+\frac14(\sigma_0^3-\sigma_N^3)
&\le\frac12\vert\lambda_1\vert^{\frac32}+\frac14(\sigma_N-\sigma_0)^3+\frac14(\sigma_0^3-\sigma_N^3)\\
&=\frac12\vert\lambda_1\vert^{\frac32}-\frac34\sigma_0\sigma_N(\sigma_N-\sigma_0)
\le\frac12\vert\lambda_1\vert^{\frac32}\,.
\end{align*}
From \eqref{eq:N1} we thus obtain that Theorem \ref{th:main} implies 
\begin{align*}
\frac12\vert\lambda_1\vert^{\frac32}+\sum_{j\ge2} \vert \lambda_j\vert ^\frac32
\le \frac{3}{16}\int_0^\infty V(x)^2\dd x\,. 
\end{align*}
While this result cannot be read off directly from the bound in Theorem~\ref{th:LTELU}, we note that it can be alternatively obtained by first applying the min-max principle and subsequently using Theorem \ref{th:LTELU} in the special case of a Neumann boundary condition~$\sigma_0=0$. 

More can be said if one can establish that $\sigma_0\ge\vert\lambda_1\vert^{1/2}$. In this case $(\vert\lambda_1\vert^{1/2}-\sigma_N)^2\ge(\vert\lambda_1\vert^{1/2}-\sigma_0)^2$ and thus \eqref{eq:Youngid} shows
\begin{align*}
\frac34\vert \lambda_1\vert \sigma_N
&\le\frac{1}{2}\vert \lambda_1\vert ^{\frac32}+\frac14\sigma_N^3-\frac{1}{4}(\vert \lambda_1\vert ^\frac12-\sigma_0)^2(2\vert \lambda_1\vert ^{\frac12}+\sigma_0)\\
&=\frac34\vert\lambda_1\vert\sigma_0-\frac14\sigma_0^3+\frac14\sigma_N^3\,.
\end{align*}
As a consequence 
\begin{align*}
\frac34\vert\lambda_1\vert(\sigma_N-\sigma_0)+\frac14(\sigma_0^3-\sigma_N^3)\le0
\end{align*}
and thus, on account of \eqref{eq:N1}, we obtain
\begin{align*}
\sum_{j\ge1} \vert \lambda_j\vert ^\frac32
\le \frac{3}{16}\int_0^\infty V(x)^2\dd x
\end{align*}
from Theorem \ref{th:main}. We observe that, in this special case, the Lieb--Thirring bound holds without any additional terms. It is not possible to obtain this result from Theorem \ref{th:LTELU} as the additional term in the inequality has the opposite sign, i.e.
\begin{align*}
\frac12\vert\lambda_1\vert^{\frac32}-\frac34\vert\lambda_1\vert\sigma_0+\frac14\sigma_0^3\ge0
\end{align*}
by Young's inequality. 

Lastly, if $\sigma_0\le 0$ and $V\le0$ then Young's inequality implies
\begin{align*}
-\frac34\vert\lambda_1\vert^{\frac12}\sigma_0\le\frac12\vert\lambda_1\vert^{\frac32}-\frac14\sigma_0^3
\end{align*}
and together with \eqref{eq:Young} and \eqref{eq:N1} we conclude that Theorem \ref{th:main} implies
\begin{align*}
\sum_{j\ge2} \vert \lambda_j\vert ^\frac32
\le \frac{3}{16}\int_0^\infty V(x)^2\dd x\,. 
\end{align*}
This result also follows from Theorem \ref{th:LTELU} by the same argument.

\subsection*{Acknowledgements}
The author was partially supported by VILLUM FONDEN through the QMATH Centre of Excellence (grant no.~10059) and  by VR grant 2017-04736 at the Royal Swedish Academy of Sciences. The author is thankful to the anonymous referee of an earlier version of this manuscript for their useful comments and to Ari Laptev for stimulating discussions on the topic of commutation methods.

\end{document}